\tikzstyle{vertex}=[ circle, draw, inner sep=0pt, minimum size=2pt,]
\newtheorem{proplab}{Proposition}
\theoremstyle{definition}
\theoremstyle{definition}
\newtheorem{thm}{Theorem}
\newtheorem*{thm*}{Theorem}
\newtheorem{lemlab}{Lemma}
\newcounter{cnt}
\def\mydggeometry{\makeatletter\dg@YGRID=1\dg@XGRID=20\unitlength=0.003pt\makeatother}
\makeatother \theoremstyle{remark}
\numberwithin{equation}{section}
\let\bwdg\bigwedge
\def\bigwedge{{\textstyle\bwdg}}
\begin{document}

\newcommand{\thmref}[1]{Theorem~\ref{#1}}
\newcommand{\secref}[1]{Section~\ref{#1}}
\newcommand{\lemref}[1]{Lemma~\ref{#1}}
\newcommand{\propref}[1]{Proposition~\ref{#1}}
\newcommand{\corref}[1]{Corollary~\ref{#1}}
\newcommand{\remref}[1]{Remark~\ref{#1}}
\newcommand{\defref}[1]{Definition~\ref{#1}}
\newcommand{\er}[1]{(\ref{#1})}
\newcommand{\id}{\operatorname{id}}
\newcommand{\ord}{\operatorname{\emph{ord}}}
\newcommand{\sgn}{\operatorname{sgn}}
\newcommand{\wt}{\operatorname{wt}}
\newcommand{\tensor}{\otimes}
\newcommand{\from}{\leftarrow}
\newcommand{\nc}{\newcommand}
\newcommand{\rnc}{\renewcommand}
\newcommand{\dist}{\operatorname{dist}}
\newcommand{\qbinom}[2]{\genfrac[]{0pt}0{#1}{#2}}
\nc{\cal}{\mathcal} \nc{\goth}{\mathfrak} \rnc{\bold}{\mathbf}
\renewcommand{\frak}{\mathfrak}
\newcommand{\supp}{\operatorname{supp}}
\newcommand{\Irr}{\operatorname{Irr}}
\newcommand{\psym}{\mathcal{P}^+_{K,n}}
\newcommand{\psyml}{\mathcal{P}^+_{K,\lambda}}
\newcommand{\psymt}{\mathcal{P}^+_{2,\lambda}}
\renewcommand{\Bbb}{\mathbb}
\nc\bomega{{\mbox{\boldmath $\omega$}}} \nc\bpsi{{\mbox{\boldmath $\Psi$}}}
 \nc\balpha{{\mbox{\boldmath $\alpha$}}}
 \nc\bpi{{\mbox{\boldmath $\pi$}}}
  \nc\bxi{{\mbox{\boldmath $\xi$}}}
\nc\bmu{{\mbox{\boldmath $\mu$}}} \nc\bcN{{\mbox{\boldmath $\cal{N}$}}} \nc\bcm{{\mbox{\boldmath $\cal{M}$}}} \nc\blambda{{\mbox{\boldmath
$\lambda$}}}\nc\bnu{{\mbox{\boldmath $\nu$}}}

\newcommand{\Tmn}{\bold{T}_{\lambda^1, \lambda^2}^{\nu}}

\newcommand{\lie}[1]{\mathfrak{#1}}
\newcommand{\ol}[1]{\overline{#1}}
\makeatletter
\def\section{\def\@secnumfont{\mdseries}\@startsection{section}{1}%
  \z@{.7\linespacing\@plus\linespacing}{.5\linespacing}%
  {\normalfont\scshape\centering}}
\def\subsection{\def\@secnumfont{\bfseries}\@startsection{subsection}{2}%
  {\parindent}{.5\linespacing\@plus.7\linespacing}{-.5em}%
  {\normalfont\bfseries}}
\makeatother
\def\subl#1{\subsection{}\label{#1}}
 \nc{\Hom}{\operatorname{Hom}}
  \nc{\mode}{\operatorname{mod}}
\nc{\End}{\operatorname{End}} \nc{\wh}[1]{\widehat{#1}} \nc{\Ext}{\operatorname{Ext}} \nc{\ch}{\text{ch}} \nc{\ev}{\operatorname{ev}}
\nc{\Ob}{\operatorname{Ob}} \nc{\soc}{\operatorname{soc}} \nc{\rad}{\operatorname{rad}} \nc{\head}{\operatorname{head}}
\def\Im{\operatorname{Im}}
\def\gr{\operatorname{gr}}
\def\mult{\operatorname{mult}}
\def\Max{\operatorname{Max}}
\def\ann{\operatorname{Ann}}
\def\sym{\operatorname{sym}}
\def\loc{\operatorname{loc}}
\def\Res{\operatorname{\br^\lambda_A}}
\def\und{\underline}
\def\Lietg{$A_k(\lie{g})(\bsigma,r)$}
\def\res{\operatorname{res}}

 \nc{\Cal}{\cal} \nc{\Xp}[1]{X^+(#1)} \nc{\Xm}[1]{X^-(#1)}
\nc{\on}{\operatorname} \nc{\Z}{{\bold Z}} \nc{\J}{{\cal J}} \nc{\C}{{\bold C}} \nc{\Q}{{\bold Q}}
\renewcommand{\P}{{\cal P}}
\nc{\N}{{\Bbb N}} \nc\boa{\bold a} \nc\bob{\bold b} \nc\boc{\bold c} \nc\bod{\bold d} \nc\boe{\bold e} \nc\bof{\bold f} \nc\bog{\bold g}
\nc\boh{\bold h} \nc\boi{\bold i} \nc\boj{\bold j} \nc\bok{\bold k} \nc\bol{\bold l} \nc\bom{\bold m} \nc\bon{\bold n} \nc\boo{\bold o}
\nc\bop{\bold p} \nc\boq{\bold q} \nc\bor{\bold r} \nc\bos{\bold s} \nc\boT{\bold t} \nc\boF{\bold F} \nc\bou{\bold u} \nc\bov{\bold v}
\nc\bow{\bold w} \nc\boz{\bold z} \nc\boy{\bold y} \nc\ba{\bold A} \nc\bb{\bold B} \nc\bc{\bold C} \nc\bd{\bold D} \nc\be{\bold E} \nc\bg{\bold
G} \nc\bh{\bold H} \nc\bi{\bold I} \nc\bj{\bold J} \nc\bk{\bold K} \nc\bl{\bold L} \nc\bm{\bold M} \nc\bn{\bold N} \nc\bo{\bold O} \nc\bp{\bold
P} \nc\bq{\bold Q} \nc\br{\bold R} \nc\bs{\bold S} \nc\bt{\bold T} \nc\bu{\bold U} \nc\bv{\bold V} \nc\bw{\bold W} \nc\bz{\bold Z} \nc\bx{\bold
x} \nc\KR{\bold{KR}} \nc\rk{\bold{rk}} \nc\het{\text{ht }}

\nc\toa{\tilde a} \nc\tob{\tilde b} \nc\toc{\tilde c} \nc\tod{\tilde d} \nc\toe{\tilde e} \nc\tof{\tilde f} \nc\tog{\tilde g} \nc\toh{\tilde h}
\nc\toi{\tilde i} \nc\toj{\tilde j} \nc\tok{\tilde k} \nc\tol{\tilde l} \nc\tom{\tilde m} \nc\ton{\tilde n} \nc\too{\tilde o} \nc\toq{\tilde q}
\nc\tor{\tilde r} \nc\tos{\tilde s} \nc\toT{\tilde t} \nc\tou{\tilde u} \nc\tov{\tilde v} \nc\tow{\tilde w} \nc\toz{\tilde z} \nc\woi{w_{\omega_i}}
\nc\chara{\operatorname{Char}}
\title{Conjecture $\mathcal{O}$ holds for some Horospherical Varieties of Picard Rank 1}
\author[Bones, Fowler, Schneider, Shifler]{Lela Bones, Garrett Fowler, Lisa Schneider and Ryan M. Shifler}\address{\noindent Department of Mathematics, Salisbury University, MD 21801}
\email{lbones1@gulls.salisbury.edu, gfowler2@gulls.salisbury.edu, lmschneider@salisbury.edu, rmshifler@salisbury.edu}
\begin{abstract}
Property $\mathcal{O}$ for an arbitrary complex, Fano manifold $X$, is a statement about the eigenvalues of the linear operator obtained from the quantum multiplication of the anticanonical class of $X$. Conjecture $\mathcal{O}$ is a conjecture that Property $\mathcal{O}$ holds for any Fano variety. Pasquier listed the smooth non-homogeneous horospherical varieties of Picard rank 1 into five classes. Conjecture $\mathcal{O}$ has already been shown to hold for the odd symplectic Grassmannians which is one of these classes. We will show that Conjecture $\mathcal{O}$ holds for two more classes and an example in a third class of Pasquier's list. The theory of Perron-Frobenius reduces our proofs to be graph-theoretic in nature.
\end{abstract}
\maketitle
\section{Introduction}
The purpose of this paper is to prove that Conjecture $\mathcal{O}$ holds for some horospherical varieties of Picard rank 1. We recall the precise statement of Conjecture $\mathcal{O}$ for varieties of Picard rank 1, following \cite[Section 3]{GGI}. Let $F$ be a Fano variety, let $K:=K_F$ be the canonical line bundle of $F$, let $F_D$ be a fundamental divisor of $F$, and let $c_1(F):=c_1(-K) \in H^2(F)$ be the anticanonical class. The Fano index of $F$ is $r$, where $r$ is the greatest integer such that $K_F \cong -rF_D$. The small quantum cohomology ring $(QH^*(F), \star)$ is a graded algebra over $\mathbb{Z}[q]$, where $q$ is the quantum parameter. We define the small quantum cohomology in Section~\ref{sec:Section 2.1}. Consider the specialization $H^{\bullet}(F):=QH^*(F)|_{q=1}$ at $q=1$. The quantum multiplication by the first Chern class $c_1(F)$ induces an endomorphism $\hat{c}_1$ of the finite-dimensional vector space $H^{\bullet}(F)$: \[y \in H^{\bullet}(F) \mapsto \hat{c}_1(y):=(c_1(F)\star y)|_{q=1}. \]

Denote by $\delta_0:=\max \{|\delta| : \delta \mbox{ is an eigenvalue of } \hat{c}_1 \}.$ Then Property $\mathcal{O}$ states the following:
\begin{enumerate}
\item The real number $\delta_0$ is an eigenvalue of $\hat{c}_1$ of multiplicity one.
\item If $\delta$ is any eigenvalue of $\hat{c}_1$ with $|\delta|=\delta_0$, then $\delta=\delta_0 \gamma$ for some $r$-th root of unity $\gamma \in \mathbb{C}$, where $r$ is the Fano index of $F$.
\end{enumerate}
The property $\mathcal{O}$ was conjectured to hold for any Fano, complex manifold $F$ in \cite{GGI}. If a Fano, complex, manifold has Property $\mathcal{O}$ then we say that the space satisfies Conjecture $\mathcal{O}$. Conjecture $\mathcal{O}$ underlies Gamma Conjectures I and II of Galkin, Golyshev, and Iritani. The Gamma Conjectures refine earlier conjectures by Dubrovin on Frobenius manifolds and mirror symmetry. Conjecture $\mathcal{O}$ has already been proved for the homogeneous $G/P$ case in \cite{CL}, the odd symplectic Grassmannians in \cite{LMS}, del Pezzo surfaces in \cite{delpezzo}, and projective complete intersections in \cite{projinter}. The Perron-Frobenius theory of nonnegative matrices reduces the proofs that Conjecture $\mathcal{O}$ holds for the homogeneous and the odd symplectic Grassmannian cases to be a graph-theoretic check. This is because Conjecture $\mathcal{O}$ is largely reminiscent of Perron-Frobenius Theory. In this manuscript we will use the same graph-theoretic approach to prove that Conjecture $\mathcal{O}$ holds for some smooth horospherical varieties of Picard rank 1.

Next we recall the definition of a horospherical variety following \cite{GPPS}. Let $G$ be a complex reductive group. A $G$-variety is a reduced scheme of finite type over the field of complex numbers $\mathbb{C}$, equipped with an algebraic action of $G$. Let $B$ be a Borel subgroup of $G$. A $G$-variety $X$ is called spherical if $X$ has a dense $B$-orbit. Let $X$ be a $G$-spherical variety and let $H$ be the stabilizer of a point in the dense $G$-orbit in $X$. The variety $X$ is called {\it horospherical} if $H$ contains a conjugate of the maximal unipotent subgroup of $G$ contained in the Borel subgroup $B$.

Smooth horospherical varieties of Picard rank 1 were classified by Pasquier in \cite{P}. These varieties are either homogeneous or can be constructed in a uniform way via a triple (Type($G$),$\omega_Y$,$\omega_Z$) of representation-theoretic data, where Type($G$) is the semisimple Lie type of the reductive group $G$ and $\omega_Y, \omega_Z$ are fundamental weights. See \cite[Section 1.3]{P} for details. Pasquier classified the possible triples in five classes:
\begin{enumerate}
\item $(B_n, \omega_{n-1},\omega_n)$ with $n \geq 3$;
\item $(B_3, \omega_1,\omega_3)$;
\item $(C_n,\omega_m,\omega_{m-1})$ with $n \geq 2$ and $m \in [2,n]$ (the odd symplectic Grassmannians);
\item $(F_4, \omega_2, \omega_3)$;
\item $(G_2,\omega_1,\omega_2)$.
\end{enumerate}

 In Proposition 3.6 of \cite{Pthesis}, Pasquier showed the triples in the above list are Fano varieties. We are now able to state the main theorem:

\begin{thm}\label{thm:main}
If $F$ belongs to the classes (1) for $n=3$, (2), (3), and (5) of Pasquier's list, then Conjecture $\mathcal{O}$ holds for $F$.
\end{thm}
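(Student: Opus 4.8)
The plan is to realize $\hat c_1$ as a nonnegative matrix and extract Property $\mathcal{O}$ from Perron--Frobenius theory, exactly as in the homogeneous \cite{CL} and odd symplectic \cite{LMS} cases. First I would fix the Schubert basis $\{\sigma_w\}$ of $H^\bullet(F)$ and write down the matrix $M$ of $\hat c_1$ in this basis. Because the structure constants of $\star$ are genus-zero, three-point Gromov--Witten invariants, they are nonnegative integers, and $c_1(F)=r[F_D]$ is a nef class with nonnegative classical intersection coefficients; hence after specializing $q=1$ every entry of $M$ is a nonnegative real number. Thus $M$ is a nonnegative matrix, and its associated directed graph $\Gamma$ (an edge $w\to v$ whenever the coefficient of $\sigma_v$ in $c_1(F)\star\sigma_w$ is nonzero) governs its spectral behavior.

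The second ingredient is the grading. Writing complex degrees, $c_1(F)$ has degree $1$ and the quantum parameter $q$ has degree equal to the Fano index $r$. Consequently the classical term of $c_1(F)\star\sigma_w$ raises the Schubert degree by $1$, while each quantum term $q^d(\cdots)$ changes it by $1-dr$; in either case the degree changes by $1$ modulo $r$. Therefore, grading the Schubert basis by complex degree modulo $r$, the operator $\hat c_1$ sends the piece $V_j$ into $V_{j+1}$ with indices taken mod $r$, so $\Gamma$ is $\mathbb{Z}/r\mathbb{Z}$-graded and every directed cycle in $\Gamma$ has length divisible by $r$. This observation is what ties the cyclic symmetry in Property $\mathcal{O}$(2) to the Fano index.

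With these two structural facts in hand, Property $\mathcal{O}$ reduces to two graph-theoretic checks performed case by case for the finitely many varieties in question. I would first verify that $\Gamma$ is strongly connected, i.e.\ that $M$ is irreducible; Perron--Frobenius then yields that the spectral radius $\delta_0$ is a positive, simple eigenvalue, which is Property $\mathcal{O}$(1). I would then verify that $\Gamma$ contains a directed cycle of length exactly $r$. Combined with the divisibility forced by the grading, this makes the index of imprimitivity $h=\gcd\{\text{cycle lengths}\}$ equal to $r$; the Perron--Frobenius description of the peripheral spectrum then says precisely that the eigenvalues of modulus $\delta_0$ are $\delta_0\gamma$ as $\gamma$ ranges over the $r$-th roots of unity, which is Property $\mathcal{O}$(2).

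I expect the main obstacle to be the first input rather than the graph theory: one needs the explicit quantum multiplication by $c_1(F)$ for each of these non-homogeneous horospherical varieties, since unlike the $G/P$ case there is no off-the-shelf quantum Chevalley rule. This will require extracting the relevant Gromov--Witten data (or a presentation of $QH^*(F)$) from the geometry of Pasquier's construction and the minimal rational curves on $F$. Once $M$ is known, strong connectivity and the existence of a length-$r$ cycle are finite, mechanical verifications, and the restriction to $n=3$ in class (1) is exactly what keeps this verification finite.
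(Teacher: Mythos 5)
Your overall strategy --- realize $\hat c_1$ as a nonnegative matrix, check strong connectivity of its directed graph and the existence of a cycle of length $r$, and invoke Perron--Frobenius --- is exactly the paper's strategy (its sufficient-criterion Lemma lists precisely these three conditions). But two steps in your proposal have genuine gaps. First, your justification of nonnegativity does not hold in this setting. You assert that the entries of $M$ are nonnegative ``because the structure constants of $\star$ are genus-zero, three-point Gromov--Witten invariants.'' That reasoning is valid for homogeneous spaces $G/P$, where transversality makes these invariants enumerative counts; the varieties at issue are precisely the \emph{non-homogeneous} horospherical varieties (and there is no Schubert basis in the usual sense), and for non-homogeneous Fano varieties Gromov--Witten invariants and Chevalley-type coefficients can be negative. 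The paper never argues nonnegativity on a priori grounds: it reads it off the explicit quantum Chevalley formulas of Gonzales--Pech--Perrin--Samokhin \cite{GPPS} (their Propositions 4.3, 4.4, 4.6), which is exactly the ``missing input'' you defer to the end of your proposal. So that input is not just the hard computational part --- without it, your nonnegativity step has no proof at all, since it cannot be established in general.

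Second, your claim that the verifications are ``finite, mechanical'' and that the restriction to $n=3$ in class (1) ``is exactly what keeps this verification finite'' overlooks class (3): the odd symplectic Grassmannians $(C_n,\omega_m,\omega_{m-1})$ with $n\ge 2$ and $m\in[2,n]$ form an \emph{infinite} family, so no case-by-case finite check can cover them, and the theorem as stated includes this class. The paper disposes of class (3) by citing the prior result of Li--Mihalcea--Shifler \cite{LMS}, where Conjecture $\mathcal{O}$ for odd symplectic Grassmannians is proved by a uniform argument; you mention \cite{LMS} only as a methodological precedent and never invoke it to handle class (3), so as written your proof does not cover that class. One smaller remark: your mod-$r$ grading argument (forcing the index of imprimitivity to equal $r$) is correct but stronger than necessary --- Property $\mathcal{O}$(2) only needs the index of imprimitivity to \emph{divide} $r$, which the existence of a single length-$r$ cycle already guarantees, and this weaker statement is all the paper's Lemma uses.
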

{\it Acknowledgements:} We would like to thank the anonymous referees for their useful comments and suggestions to improve the presentation of this paper.
\section{Preliminaries}

\subsection{Quantum Cohomology}
\label{sec:Section 2.1}
The small quantum cohomology is defined as follows. Let $(\alpha_i)_i$ be a basis of $H^*(F)$, the classical cohomology ring, and let $(\alpha_i^{\vee})_i$ be the dual basis for the Poincar\'e pairing. The multiplication is given by \[ \alpha_i \star \alpha_j= \sum_{d\geq 0, k} c_{i,j}^{k,d}q^d\alpha_k \] where $c_{i,j}^{k,d}$ are the 3-point, genus 0, Gromov-Witten invariants corresponding to the classes $\alpha_i, \alpha_j$, and $\alpha_k^{\vee}$. We will make use of the quantum Chevalley formula which is the multiplication of a hyperplane class $h$ with another class $\alpha_j$. The result \cite[Theorem 0.0.3]{GPPS} implies that if $F$ belongs to the classes (1) for $n=3$, (2), or (5) of Pasquier's list, then there is an explicit quantum Chevalley formula. The explicit quantum Chevalley formula is the key ingredient used to prove Property $\mathcal{O}$ holds.

\subsection{Sufficient Criterion for Property $\mathcal{O}$ to hold} We recall the notion of the (oriented) quantum Bruhat graph of a Fano variety $F$. The vertices of this graph are the basis elements $\alpha_i \in H^{\bullet}(F):=QH^*(F)|_{q=1}$. There is an oriented edge $\alpha_i \rightarrow \alpha_j$ if the class $\alpha_j$ appears with positive coefficient (where we consider $q>0$) in the quantum Chevalley multiplication $h \star \alpha_i$ for some hyperplane class $h$. Using the Perron-Frobenius theory of non-negative matrices, Conjecture $\mathcal{O}$ reduces to a graph-theoretic check of the quantum Bruhat graph. The techniques involving Perron-Frobenius theory used by Li, Mihalcea, and Shifler in \cite{LMS} and Cheong and Li in \cite{CL} imply the following lemma:

\begin{lemlab}\label{lemma: propO}
If the following conditions hold for a Fano variety $F$:
\begin{enumerate}
\item the matrix representation of $\hat{c}_1$ is nonnegative,
\item the quantum Bruhat graph of $F$ is strongly connected, and 
\item there exists a cycle of length $r$, the Fano index, in the quantum Bruhat graph of $F$,
\end{enumerate}
then Property $\mathcal{O}$ holds for $F$. We say the matrix representation of $\hat{c}_1$ is nonnegative if all of the entries are nonnegative.
\end{lemlab}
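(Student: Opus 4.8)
The plan is to show that conditions (1), (2), and (3) together force Property $\mathcal{O}$, by invoking the Perron--Frobenius theory of nonnegative matrices, which is precisely the machinery deployed in \cite{LMS} and \cite{CL}. Let $M$ denote the matrix representation of $\hat{c}_1$ with respect to the basis $(\alpha_i)_i$ of $H^{\bullet}(F)$. By condition (1), $M$ is a nonnegative real matrix, so the spectral radius $\delta_0 = \max\{|\delta| : \delta \text{ is an eigenvalue of } \hat{c}_1\}$ is itself an eigenvalue of $M$ by the classical Perron--Frobenius theorem for nonnegative matrices. The adjacency structure of the quantum Bruhat graph is by construction exactly the zero/nonzero pattern of $M$: there is an edge $\alpha_i \to \alpha_j$ precisely when the $(j,i)$ entry of $M$ is positive. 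Hence condition (2), strong connectedness of the graph, is equivalent to the irreducibility of $M$.

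First I would apply the Perron--Frobenius theorem in its irreducible form: for an irreducible nonnegative matrix, the spectral radius $\delta_0$ is a \emph{simple} eigenvalue, which is exactly statement (1) of Property $\mathcal{O}$ (multiplicity one). This step uses conditions (1) and (2) only and is essentially immediate from the cited theorem, so I would state it cleanly and attribute the reduction to the arguments of \cite{LMS, CL}. The remaining task is statement (2) of Property $\mathcal{O}$, the claim that every eigenvalue $\delta$ with $|\delta| = \delta_0$ has the form $\delta = \delta_0 \gamma$ with $\gamma$ an $r$-th root of unity.

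For the peripheral-spectrum statement, the relevant notion is the \emph{period} (or index of imprimitivity) $h$ of the irreducible matrix $M$, defined as the greatest common divisor of the lengths of all cycles in the associated graph. The Perron--Frobenius theory for irreducible matrices states that the eigenvalues of modulus $\delta_0$ are exactly $\delta_0 \zeta$ as $\zeta$ ranges over the $h$-th roots of unity, each simple. So I would argue that $r \mid h$, i.e.\ that every cycle length in the quantum Bruhat graph is divisible by the Fano index $r$; this follows from the grading of $QH^*(F)$, since quantum Chevalley multiplication by a hyperplane class raises cohomological degree by $1$ modulo the degree shift $r$ associated with the quantum parameter $q$, so any closed cycle must return to the same graded piece and therefore have length a multiple of $r$. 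Combined with condition (3), which exhibits a cycle of length exactly $r$ and thus forces $h \mid r$, this yields $h = r$. Consequently the modulus-$\delta_0$ eigenvalues are precisely $\delta_0 \gamma$ for $\gamma$ an $r$-th root of unity, establishing statement (2).

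The main obstacle I anticipate is not any single Perron--Frobenius citation but rather the bookkeeping that translates the geometric/graded structure of $QH^*(F)$ into the combinatorial divisibility statement ``every cycle length is a multiple of $r$.'' Making this precise requires tracking how the degree of $q$ and the cohomological degree interact under the Chevalley formula, and verifying that the orientation conventions for the quantum Bruhat graph match the sign/positivity conventions for $M$. Once that divisibility is in hand, the identification of the period $h$ with $r$ and the conclusion are routine, and the proof reduces to the three graph-theoretic checks of conditions (1)--(3), which are precisely what the subsequent sections verify case by case for the classes in \thmref{thm:main}.
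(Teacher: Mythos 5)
Your proposal is correct and is essentially the argument the paper intends: the paper itself gives no written proof of this lemma, instead attributing it to the Perron--Frobenius techniques of \cite{LMS} and \cite{CL}, and your reduction (nonnegativity plus strong connectedness $\Rightarrow$ $M$ irreducible $\Rightarrow$ $\delta_0$ a simple eigenvalue; peripheral spectrum $=\delta_0\cdot\{h\text{-th roots of unity}\}$ where $h$ is the period, i.e.\ the gcd of cycle lengths) is exactly that machinery. One correction of emphasis: the step you flag as the ``main obstacle,'' namely proving $r \mid h$ via the grading of $QH^*(F)$, is not needed at all. Property $\mathcal{O}$(2) asserts only a containment --- every eigenvalue of modulus $\delta_0$ is $\delta_0$ times an $r$-th root of unity --- and for that it suffices that every $h$-th root of unity be an $r$-th root of unity, i.e.\ that $h \mid r$, which is immediate from condition (3) and the characterization of $h$ as the gcd of cycle lengths. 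Dropping the grading argument both shortens the proof and keeps it within the lemma's stated hypotheses, which is important since the lemma is phrased for an abstract Fano $F$ and the graded-degree bookkeeping is extraneous geometric input; the equality $h=r$ is stronger than what the lemma claims.
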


We refer the reader to \cite[section 4.3]{Minc} for further details on Perron-Frobenius theory.

\section{Checking Property $\mathcal{O}$ Holds}
Let $X$ be a horospherical variety. We will simplify our notation where the basis of $H^{\bullet}(X)$ is $\{1,h,\alpha_i\}_{i\in I}$ for some finite index set $I$.  Observe by \cite{GPPS} that  the anticanonical classes are \[ c_1(X) = \left\{ \begin{array}{ll}
            5h & \text{ when X is case (1) for }n=3 \\
            7h & \text{ when X is case (2)}\\
            4h & \text{ when X is case (5)}
        \end{array}
    \right.\]
and the Fano indices are
 \[ r = \left\{ \begin{array}{ll}
            5 & \text{ when X is case (1) for }n=3 \\
            7 & \text{ when X is case (2)}\\
            4& \text{ when X is case (5)}
        \end{array}.
    \right.\]   
The endomorphism $\hat{c}_1$ acting on the basis elements of $H^{\bullet}(X)$ is determined by the Chevalley formula in the following way:
\begin{eqnarray*}
            \hat{c}_1(\alpha_i)&=&5(h\star \alpha_i)|_{q=1}  \text{ when X is case (1) for }n=3, \\
            \hat{c}_1(\alpha_i)&=&7(h\star \alpha_i)|_{q=1}   \text{ when X is case (2)}, \text{and}\\
            \hat{c}_1(\alpha_i)&=&4(h\star \alpha_i)|_{q=1}   \text{ when X is case (5)}.
\end{eqnarray*}

Each of the following three subsections will show that Conjecture $\mathcal{O}$ holds for case (1) for $n=3$, case (2), and case (5) of Pasquier's list, respectively. In each subsection we will reformulate the quantum Chevalley formulas stated in \cite{GPPS}, present the quantum Bruhat graph, and argue that each condition of Lemma \ref{lemma: propO} is satisfied. For each case, we have kept the same format of the equations presented by \cite{GPPS} with our prescribed basis for ease of identification for the reader. For example, line 3 in \cite[Proposition 4.3]{GPPS} is \[h*\sigma'_{u_2}=\sigma'_{u_3}+\sigma'_{u'_3} \mbox{ } and \mbox{ } h* \sigma'_{u'_2}=2\sigma'_{u'_3}+\tau_{v_0}. \] In Proposition 1 below we identify this line with \[\hat{c}_1(\alpha_1)=5\alpha_3+5\alpha_4 \mbox{ } and \mbox{ } \hat{c}_1(\alpha_2)=10\alpha_3+5\alpha_5. \]

\subsection{Case (1) for $n=3$}  We will reformulate the quantum Chevalley formula stated in \cite{GPPS} using the basis $\{1,h, \alpha_1, \alpha_2,\cdots, \alpha_{18}\}$.

\begin{proplab}\label{Gon4.3} The following equalities hold by \cite[Proposition 4.3]{GPPS}.
\begin{enumerate}
\item $\hat{c}_1(1)=5h$
\item $\hat{c}_1(h)=10\alpha_1+5\alpha_2$
\item $\hat{c}_1(\alpha_1)=5\alpha_3+5\alpha_4$ and $\hat{c}_1(\alpha_2)=10\alpha_3+5\alpha_5$
\item $\hat{c}_1(\alpha_3) = 10\alpha_6+5\alpha_7+5\alpha_8, \ \ \hat{c}_1(\alpha_4)=5\alpha_6 + 10\alpha_7, $ and $\hat{c}_1(\alpha_5)=5\alpha_8$
\item $\hat{c}_1(\alpha_6)=10\alpha_9+5\alpha_{10}+5\alpha_{11},\ \ \hat{c}_1(\alpha_7) = 5\alpha_{10}$ and $\hat{c}_1(\alpha_8)=5\alpha_{11}+5\cdot1$
\item $\hat{c}_1(\alpha_9)=5\alpha_{12}+5\alpha_{13}, \ \ \hat{c}_1(\alpha_{10})=10\alpha_{13}+5\alpha_{14}\ \ \hat{c}_1(\alpha_{11})=5\alpha_{12}+5\alpha_{14}+5h$
\item $\hat{c}_1(\alpha_{12})=5\alpha_{15}+5\alpha_1, \ \ \hat{c}_1(\alpha_{13})=5\alpha_{15} +5 \alpha_{16},$ and $\hat{c}_1(\alpha_{14}) = 5\alpha_{15}+5\alpha_2$
\item $\hat{c}_1(\alpha_{15})=5\alpha_{17}+5\alpha_3$ and $\hat{c}_1(\alpha_{16}) =5 \alpha_{17}+5\alpha_5$
\item $\hat{c}_1(\alpha_{17})=5 \alpha_{18}+5\alpha_6+5\alpha_8$
\item $\hat{c}_1(\alpha_{18}) =5 \alpha_9+5\alpha_{11}+10\cdot1$
\end{enumerate}
\end{proplab}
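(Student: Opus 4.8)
The plan is to read off each equality directly from the quantum Chevalley formula of \cite[Proposition 4.3]{GPPS} by a rescaling together with a change of labels. For case (1) with $n=3$ the anticanonical class is $c_1(X)=5h$, so for every basis element $y$ we have $\hat{c}_1(y)=(c_1(X)\star y)|_{q=1}=5(h\star y)|_{q=1}$. Consequently the entire content of the proposition is the translation of the products $h\star(\cdot)$ recorded in \cite{GPPS} into the basis $\{1,h,\alpha_1,\dots,\alpha_{18}\}$, after multiplying every structure coefficient by $5$. No new Gromov--Witten computation is needed, since the quantum Chevalley formula is already established in \cite{GPPS}.

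First I would fix an explicit dictionary between the Schubert basis $\{\sigma,\sigma',\tau,\dots\}$ appearing in \cite[Proposition 4.3]{GPPS} and our relabeled basis. The correspondence is essentially forced by the grading: the fundamental class is matched with $1$, the hyperplane class with $h$, and the remaining $18$ Schubert classes are ordered by increasing codimension and assigned to $\alpha_1,\dots,\alpha_{18}$ so that, in each classical Chevalley product, the classes on the right-hand side have codimension one greater than the class being multiplied. The worked example in the text---where $\sigma'_{u_2}\leftrightarrow\alpha_1$, $\sigma'_{u'_2}\leftrightarrow\alpha_2$, $\sigma'_{u_3}\leftrightarrow\alpha_3$, $\sigma'_{u'_3}\leftrightarrow\alpha_4$, and $\tau_{v_0}\leftrightarrow\alpha_5$---fixes the normalization and exhibits the pattern that the full dictionary extends.

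With the dictionary in hand, each of the ten items is obtained by direct substitution: read off $h\star\sigma$ from \cite[Proposition 4.3]{GPPS}, replace each Schubert class by its $\alpha$-label, and multiply through by $5$. Thus a product of the form $h\star\sigma=2\sigma_a+\sigma_b$ becomes $\hat{c}_1(\alpha)=10\alpha_a+5\alpha_b$. The lower-degree summands $5\cdot 1$ and $5h$ in items (5), (6), and (9), as well as the term $10\cdot 1$ in item (10), come precisely from the quantum corrections in \cite{GPPS}: a term $q^d\sigma$ in $h\star\alpha$ drops the codimension by $dr-1=5d-1$, and all such contributions survive the specialization at $q=1$. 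In particular the $10\cdot 1$ in item (10) arises from a $q^2$ correction on the point class $\alpha_{18}$, whose classical product with $h$ vanishes for dimension reasons.

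The only genuine obstacle is bookkeeping rather than mathematics. I must confirm that the chosen dictionary is a bijection---each of the $20$ Schubert classes of \cite{GPPS} is matched to exactly one basis element, with multiplicities preserved on both sides---and that across the ten grouped equations every Chevalley product is transcribed once and none is omitted or double-counted. Once the labeling is verified to be consistent with the grading and with the anchoring example, all ten equalities follow by the elementary substitution and arithmetic described above, completing the proof.
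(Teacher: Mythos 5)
Your approach is exactly the paper's: the paper offers no independent proof of this proposition---it \emph{is} the translation of \cite[Proposition 4.3]{GPPS} into the relabeled basis, with every coefficient multiplied by $5$ because $c_1(X)=5h$ and $\deg q = r = 5$. Your account of the rescaling, of why no new Gromov--Witten computation is needed, and of the quantum corrections (a $q^d$ term lowering codimension by $5d-1$, with the $10\cdot 1$ in item (10) coming from the $2q^2\cdot 1$ term in $h\star\alpha_{18}$) all match what the paper implicitly relies on.

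One concrete bookkeeping error, however: your anchoring dictionary is inconsistent with the proposition as stated. From the paper's worked example, $h\star\sigma'_{u_2}=\sigma'_{u_3}+\sigma'_{u'_3}$ and $h\star\sigma'_{u'_2}=2\sigma'_{u'_3}+\tau_{v_0}$ are identified with $\hat{c}_1(\alpha_1)=5\alpha_3+5\alpha_4$ and $\hat{c}_1(\alpha_2)=10\alpha_3+5\alpha_5$. Since the coefficient $2$ sits on $\sigma'_{u'_3}$ and the coefficient $10$ sits on $\alpha_3$, the forced correspondence is $\sigma'_{u'_3}\leftrightarrow\alpha_3$ and $\sigma'_{u_3}\leftrightarrow\alpha_4$ --- the opposite of what you wrote. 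With your assignment the second half of item (3) would read $\hat{c}_1(\alpha_2)=10\alpha_4+5\alpha_5$, contradicting the statement. (A smaller slip: the quantum corrections in item (9) are $5\alpha_6+5\alpha_8$, not terms of the form $5\cdot 1$ or $5h$.) Neither slip affects the validity of the method---the labeling is a free choice and the correct dictionary exists---but since the entire content of the proposition is this dictionary, it is precisely the kind of detail your own last paragraph says must be verified, and as written yours fails that check.
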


\label{ex: case(1)}
The following figure is the quantum Bruhat graph of the Fano variety $X$ in case (1) for $n=3$. Colored edges are introduced in this figure to improve readability. The bold edges indicate a cycle of length $r=5$, the Fano index.
\begin{figure}[H]
\begin{center}
\caption{}
\label{qcbg:1-3}
\scalebox{.75}{
\begin{tikzpicture}
\tikzset{edge/.style = {->,> = latex'}}
    \node (0) at (0,0) {$1$};
    \node (1) at (0,-1) {$h$};
    \node (2) at (-1,-2) {$\alpha_1$};
    \node (3) at (1,-2) {$\alpha_2$};
    \node (4) at (-2,-3) {$\alpha_3$};
    \node (5) at (0,-3) {$\alpha_4$};
    \node (6) at (2,-3) {$\alpha_5$};
    \node (7) at (-2,-4) {$\alpha_6$};
    \node (8) at (0,-4) {$\alpha_7$};
    \node (9) at (2,-4) {$\alpha_8$};
    \node (10) at (-2,-5) {$\alpha_9$};
    \node (11) at (0,-5) {$\alpha_{10}$};
    \node (12) at (2,-5) {$\alpha_{11}$};
    \node (13) at (-2,-6) {$\alpha_{12}$};
    \node (14) at (0,-6) {$\alpha_{13}$};
    \node (15) at (2,-6) {$\alpha_{14}$};
    \node (16) at (-1,-7) {$\alpha_{15}$};
    \node (17) at (1,-7) {$\alpha_{16}$};
    \node (18) at (0,-8) {$\alpha_{17}$};
    \node (19) at (0,-9) {$\alpha_{18}$};
    \draw [edge] (0) to (1);
    \draw [edge] (1) to (2);
    \draw [edge] (1) to (3);
    \draw [edge] (2) to (4);
    \draw [edge] (2) to (5);
    \draw [edge] (3) to (4);
    \draw [edge] (3) to (6);
    \draw [edge] (4) to (7);
    \draw [edge] (4) to (8);
    \draw [edge] (4) to (9);
    \draw [edge] (5) to (7);
    \draw [edge] (5) to (8);
    \draw [edge] (6) to (9);
    \draw [edge] (7) to (10);
    \draw [edge] (7) to (11);
    \draw [edge] (7) to (12);
    \draw [edge] (8) to (11);
    \draw [edge] (9) to (12);
    \draw [blue, edge] (9) to [bend right=100] (0);
    \draw [edge] (10) to (13);
    \draw [edge] (10) to (14);
    \draw [edge] (11) to (14);
    \draw [edge] (11) to (15);
    \draw [edge] (12) to (13);
    \draw [edge, ultra thick] (12) to (15);
    \draw [red, edge] (12) to [bend right=100] (1);
    \draw [edge] (13) to (16);
    \draw [green,edge] (13) to [bend left=100] (2);
    \draw [edge] (14) to (16);
    \draw [edge] (14) to (17);
    \draw [edge, ultra thick] (15) to (16);
    \draw [pink, edge] (15) to [bend right=100] (3);
    \draw [edge, ultra thick] (16) to (18);
    \draw [orange, edge] (16) to [bend left=100] (4);
    \draw [edge] (17) to (18);
    \draw [yellow, edge] (17) to [bend right=100] (6);
    \draw [edge, ultra thick] (18) to (19);
    \draw [purple, edge] (18) to [bend left =100] (7);
    \draw [teal, edge] (18) to [bend right=100] (9);
    \draw [lime, edge] (19) to [bend left=100] (10);
    \draw [edge, ultra thick] (19) to [bend right=100] (12);
    \draw [cyan, edge] (19) to [bend right=100] (0);
\end{tikzpicture}
}
\end{center}
\end{figure}

\begin{lemlab}\label{lemma:case1}
Property $\mathcal{O}$ holds when $X$ is case (1) with $n=3$ of Pasquier's list.
\end{lemlab}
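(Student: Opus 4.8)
The plan is to verify the three hypotheses of \lemref{lemma: propO} for the variety $X$ in case (1) with $n=3$, reading all the needed combinatorial data off the explicit Chevalley formulas of \propref{Gon4.3}, which are encoded as the quantum Bruhat graph in Figure~\ref{qcbg:1-3}. Condition (1), nonnegativity of the matrix of $\hat c_1$, is immediate: every coefficient on the right-hand side of the ten formulas in \propref{Gon4.3} is a positive multiple of $5$, so every entry of the matrix of $\hat c_1$ in the basis $\{1,h,\alpha_1,\dots,\alpha_{18}\}$ is nonnegative.

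For condition (3) I would exhibit an explicit cycle of length $r=5$. The bold edges of Figure~\ref{qcbg:1-3} trace out
\[
\alpha_{11}\to\alpha_{14}\to\alpha_{15}\to\alpha_{17}\to\alpha_{18}\to\alpha_{11},
\]
and each of these five edges is justified directly by \propref{Gon4.3}: $\alpha_{14}$ occurs in $\hat c_1(\alpha_{11})$, $\alpha_{15}$ in $\hat c_1(\alpha_{14})$, $\alpha_{17}$ in $\hat c_1(\alpha_{15})$, $\alpha_{18}$ in $\hat c_1(\alpha_{17})$, and $\alpha_{11}$ in $\hat c_1(\alpha_{18})$. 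Since all five appear with positive coefficient, this is a genuine directed cycle of the required length $5$.

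The bulk of the work, and the main obstacle, is condition (2), strong connectivity of a $20$-vertex digraph. Rather than check a path between every ordered pair directly, the efficient route is the hub argument: it suffices to prove that every vertex is reachable from the vertex $1$ and that $1$ is reachable from every vertex, for then any two vertices are joined by a directed path passing through $1$. Reachability \emph{from} $1$ follows by descending the graph level by level via the ``forward'' edges of \propref{Gon4.3}: $1\to h$, then $h$ reaches $\alpha_1,\alpha_2$, which reach $\alpha_3,\alpha_4,\alpha_5$, and so on down to $\alpha_{18}$, so all twenty vertices are hit. For the converse I would route each vertex back to $1$, exploiting the two direct back-edges $\alpha_8\to 1$ and $\alpha_{18}\to 1$ coming from the terms $5\cdot 1$ in $\hat c_1(\alpha_8)$ and $10\cdot 1$ in $\hat c_1(\alpha_{18})$; it then suffices to drive each vertex into $\alpha_8$ or $\alpha_{18}$ (for instance $\alpha_5\to\alpha_8\to 1$ and $\alpha_{16}\to\alpha_{17}\to\alpha_{18}\to 1$), which is a short case check across the levels.

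Once these two reachability claims are in place, conditions (1), (2), and (3) of \lemref{lemma: propO} all hold for $X$, and Property $\mathcal{O}$ follows, proving the lemma. The only delicate point is to organize the strong-connectivity verification so that it is manifestly exhaustive; the level-by-level descent together with the two explicit return edges to $1$ makes this transparent and avoids enumerating paths between all pairs.
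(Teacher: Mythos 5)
Your proposal is correct and follows essentially the same route as the paper: both verify the three hypotheses of Lemma~\ref{lemma: propO} using the formulas of Proposition~\ref{Gon4.3}, and your five-cycle $\alpha_{11}\to\alpha_{14}\to\alpha_{15}\to\alpha_{17}\to\alpha_{18}\to\alpha_{11}$ is exactly the bold cycle $\alpha_{18}\alpha_{11}\alpha_{14}\alpha_{15}\alpha_{17}\alpha_{18}$ the paper exhibits. The only difference is cosmetic: where the paper cites strong connectivity ``by Figure~\ref{qcbg:1-3},'' you organize that check via a hub argument through the vertex $1$ (using the back-edges $\alpha_8\to 1$ and $\alpha_{18}\to 1$), which makes the verification more transparently exhaustive but is not a different method.
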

\begin{proof}
The coefficients that appear in the equations in Proposition \ref{Gon4.3} are the entries of the matrix representation of $\hat{c}_1$. Therefore, the matrix representation of $\hat{c}_1$ is nonnegative. The quantum Bruhat graph is strongly connected by Figure \ref{qcbg:1-3}, and the cycle $\alpha_{18}\alpha_{11}\alpha_{14}\alpha_{15}\alpha_{17}\alpha_{18}$ has length $r=5$.
\end{proof}

\subsection{Case (2)} Again, we reformulate the quantum Chevalley formula from \cite{GPPS} using the basis $\{1,h, \alpha_1, \alpha_2,\cdots, \alpha_{12}\}$.
\begin{proplab}\label{Gon4.4} The following equalities hold by \cite[Proposition 4.4]{GPPS}.
\begin{enumerate}
\item $\hat{c}_1(1)=7h$
\item $\hat{c}_1(h)=7\alpha_1$
\item $\hat{c}_1(\alpha_1)=14\alpha_2+7\alpha_3$
\item $\hat{c}_1(\alpha_2) = 7\alpha_4+7\alpha_5$ and $\hat{c}_1(\alpha_3)=7\alpha_5$
\item $\hat{c}_1(\alpha_4)=7\alpha_6+7\alpha_7$ and $\hat{c}_1(\alpha_5)=7\alpha_7$
\item $\hat{c}_1(\alpha_6)=7\alpha_8$ and $\hat{c}_1(\alpha_7)=7\alpha_8+7\alpha_9$
\item $\hat{c}_1(\alpha_8)=7\alpha_{10}$ and $\hat{c}_1(\alpha_9) =7 \alpha_{10}+7\cdot1$
\item $\hat{c}_1(\alpha_{10})=7\alpha_{11}+7h$
\item $\hat{c}_1(\alpha_{11})=7 \alpha_{12}+7\alpha_1$
\item $\hat{c}_1(\alpha_{12}) =7 \alpha_2$
\end{enumerate}
\end{proplab}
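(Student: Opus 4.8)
The plan is to obtain each of the ten displayed equalities directly from the quantum Chevalley formula recorded in \cite[Proposition 4.4]{GPPS}, by translating that formula into our notation; no Gromov--Witten invariant needs to be recomputed, since all of them are already supplied there. Two conversions are involved. First, because $X$ is case (2) we have $c_1(X)=7h$, so the operator of interest is $\hat{c}_1(y)=(c_1(X)\star y)|_{q=1}=7\,(h\star y)|_{q=1}$; thus every coefficient appearing in the Chevalley product $h\star\sigma$ is multiplied by $7$ and the quantum parameter is specialized to $q=1$. Second, I would fix once and for all a bijection between the Schubert basis of \cite{GPPS} and our basis $\{1,h,\alpha_1,\dots,\alpha_{12}\}$, listing the classes by increasing complex cohomological degree: $1$ is the fundamental class, $h$ the hyperplane class, and $\alpha_1,\dots,\alpha_{12}$ the remaining Schubert classes in ascending degree. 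This is exactly the translation illustrated for case (1), where $h*\sigma'_{u_2}=\sigma'_{u_3}+\sigma'_{u'_3}$ was rewritten as $\hat{c}_1(\alpha_1)=5\alpha_3+5\alpha_4$.

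With the dictionary in hand I would proceed one basis element at a time. For a class $\sigma$ the product $h\star\sigma$ splits as a classical part, whose terms have complex degree $\deg\sigma+1$, plus a quantum part of the form $q\cdot(\cdots)$ whose terms have complex degree $\deg\sigma+1-r$ with $r=7$. Specializing $q=1$ collapses both parts into a single element of $H^{\bullet}(X)$, and multiplying by $7$ yields the stated right-hand sides. In particular the ``wrap-around'' summands $7\cdot 1$ in (7), $7h$ in (8), and $7\alpha_1$ in (9), together with the purely quantum product $\hat{c}_1(\alpha_{12})=7\alpha_2$ in (10), are precisely the images under $q=1$ of the quantum terms of \cite[Proposition 4.4]{GPPS}; the remaining summands are the classical Chevalley terms. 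A consistency check on the dictionary is available from Poincar\'e duality: the degree bookkeeping forces the symmetric Betti pattern $1,1,1,2,2,2,2,1,1,1$ across degrees $0$ through $9$, which confirms that each wrap-around term lands in the degree $\deg\sigma+1-7$ that my identification predicts.

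The only genuine work is to confirm that the basis correspondence is degree-preserving and internally consistent, and to separate correctly the classical from the quantum contribution in each product, i.e. to verify that the summand carrying $q$ in \cite{GPPS} is exactly the one reappearing in lower degree after setting $q=1$. This is the step I expect to require the most care, since a single mislabeled Schubert class would propagate through the entire list. Once the dictionary is pinned down, however, by matching degrees, the Bruhat-order structure of the classical Chevalley terms, and the unambiguous divisor product $\hat{c}_1(h)=7\alpha_1$ of item (2), each equality becomes a mechanical rewriting of one line of \cite[Proposition 4.4]{GPPS}.
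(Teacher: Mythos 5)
Your proposal is correct and matches the paper's treatment: the paper offers no proof beyond the citation, since the proposition is by construction a transcription of \cite[Proposition 4.4]{GPPS} into the basis $\{1,h,\alpha_1,\dots,\alpha_{12}\}$, with every coefficient scaled by $7$ (because $c_1(X)=7h$) and $q$ set to $1$. Your degree bookkeeping (Betti pattern $1,1,1,2,2,2,2,1,1,1$ and the quantum terms landing in degree $\deg\sigma+1-7$) is exactly the consistency check implicit in the paper's dictionary, so nothing further is needed.
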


The quantum Bruhat graph is 
\begin{figure}[H]
\caption{}
\label{qcbg:2}
\begin{center}
\scalebox{.75}{
\begin{tikzpicture}
\tikzset{edge/.style = {->,> = latex'}}
    \node (0) at (0,0) {$1$};
    \node (1) at (0,-1) {$h$};
    \node (2) at (0,-2) {$a_1$};
    \node (3) at (1,-3) {$a_3$};
    \node (4) at (-1,-3) {$a_2$};
    \node (5) at (1,-4) {$a_5$};
    \node (6) at (-1,-4) {$a_4$};
    \node (7) at (1,-5) {$a_7$};
    \node (8) at (-1,-5) {$a_6$};
    \node (9) at (2,-6) {$a_9$};
    \node (10) at (1,-6) {$a_8$};
    \node (11) at (1,-7) {$a_{10}$};
    \node (12) at (1,-8) {$a_{11}$};
    \node (13) at (1,-9) {$a_{12}$};
    \draw [edge] (0) to (1);
    \draw [edge] (1) to (2);
    \draw [edge] (2) to (3);
    \draw [edge] (2) to (4);
    \draw [edge] (3) to (5);
    \draw [edge] (4) to (5);
    \draw [edge, ultra thick] (4) to (6);
    \draw [edge] (5) to (7);
    \draw [edge] (6) to (7);
    \draw [edge, ultra thick] (6) to (8);
    \draw [edge] (7) to (9);
    \draw [edge] (7) to (10);
    \draw [edge, ultra thick] (8) to (10);
    \draw [edge] (9) to [bend right] (0);
    \draw [edge] (9) to (11);
    \draw [edge, ultra thick] (10) to (11);
    \draw [edge] (11) to [bend right=100] (1);
    \draw [edge, ultra thick] (11) to (12);
    \draw [edge] (12) to [bend right=100] (2);
    \draw [edge, ultra thick] (12) to (13);
    \draw [edge, ultra thick] (13) to [bend left=100] (4);

\end{tikzpicture}
}
\end{center}
\end{figure}

\begin{lemlab} \label{lemma:case2}
Property $\mathcal{O}$ holds when $X$ is case (2) of Pasquier's list. 
\end{lemlab}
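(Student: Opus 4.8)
The plan is to apply Lemma~\ref{lemma: propO}, verifying its three hypotheses for the variety $X$ of case (2) in precisely the manner used for case (1) in the proof of Lemma~\ref{lemma:case1}. Here the Fano index is $r=7$ and $c_1(X)=7h$, so the endomorphism $\hat{c}_1$ is read off directly from Proposition~\ref{Gon4.4}, and its oriented quantum Bruhat graph is the one drawn in Figure~\ref{qcbg:2}.

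Two of the three conditions are immediate. For condition (1), I would observe that every coefficient occurring on the right-hand sides of the ten lines of Proposition~\ref{Gon4.4} is either $7$ or $14$; hence in the basis $\{1,h,\alpha_1,\dots,\alpha_{12}\}$ the matrix of $\hat{c}_1$ has all entries in $\{0,7,14\}$ and is therefore nonnegative. For condition (3), I would exhibit the closed path highlighted by the bold edges of Figure~\ref{qcbg:2}, namely $\alpha_2\,\alpha_4\,\alpha_6\,\alpha_8\,\alpha_{10}\,\alpha_{11}\,\alpha_{12}\,\alpha_2$; each of its arrows corresponds to a strictly positive coefficient in Proposition~\ref{Gon4.4} (the summands $7\alpha_4,\,7\alpha_6,\,7\alpha_8,\,7\alpha_{10},\,7\alpha_{11},\,7\alpha_{12},\,7\alpha_2$ respectively), and it traverses exactly seven edges, so it is a cycle of length $r=7$.

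The main obstacle is condition (2), strong connectivity. The graph of Figure~\ref{qcbg:2} consists of a graded, top-to-bottom acyclic skeleton together with the upward ``return'' edges $\alpha_9\to 1$, $\alpha_{10}\to h$, $\alpha_{11}\to\alpha_1$, and $\alpha_{12}\to\alpha_2$; these arise precisely from the summands $7\cdot 1$, $7h$, $7\alpha_1$, and $7\alpha_2$ appearing in lines (7)--(10) of Proposition~\ref{Gon4.4}, and without them the graph would be acyclic and hence not strongly connected. I would establish strong connectivity in two steps. First, that the vertex $1$ reaches every other vertex, which follows by descending the skeleton starting from $1\to h\to\alpha_1$ and branching through lines (3)--(10). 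Second, that every vertex reaches $1$: from an arbitrary vertex one follows descending edges until reaching $\alpha_9$ (which maps to $1$ directly) or until reaching one of $\alpha_{10},\alpha_{11},\alpha_{12}$, whose return edges re-enter the upper part of the graph, from which the descending chain $\alpha_1\to\alpha_3\to\alpha_5\to\alpha_7\to\alpha_9\to 1$ reaches $1$. Combining the two steps, any ordered pair of vertices $u,v$ is joined by a directed path $u\to\cdots\to 1\to\cdots\to v$, so the graph is strongly connected; all of this can be read off directly from Figure~\ref{qcbg:2}. With all three hypotheses of Lemma~\ref{lemma: propO} verified, Property $\mathcal{O}$ follows for $X$.
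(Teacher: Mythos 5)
Your proposal is correct and follows essentially the same route as the paper: both verify the three hypotheses of Lemma~\ref{lemma: propO} by reading nonnegativity off Proposition~\ref{Gon4.4}, citing Figure~\ref{qcbg:2} for strong connectivity, and exhibiting the same length-$7$ cycle (the paper writes it as $\alpha_{12}\alpha_2\alpha_4\alpha_6\alpha_8\alpha_{10}\alpha_{11}\alpha_{12}$, which is your cycle started at a different vertex). Your explicit reach-$1$/reached-from-$1$ argument for strong connectivity is merely a more detailed justification of what the paper asserts by inspection of the figure.
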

\begin{proof}
The coefficients that appear in the equations in Proposition \ref{Gon4.4} are the entries of the matrix representation of $\hat{c}_1$. Therefore, the matrix representation of $\hat{c}_1$ is nonnegative. The quantum Bruhat graph is strongly connected by Figure \ref{qcbg:2}, and the cycle $\alpha_{12}\alpha_2\alpha_4\alpha_6\alpha_8\alpha_{10}\alpha_{11}\alpha_{12}$ has length $r=7$.
\end{proof}

\subsection{Case(5)} Again, we reformulate the quantum Chevalley formula from \cite{GPPS} using the basis $\{1,h, \alpha_1, \alpha_2,\cdots, \alpha_{10}\}$.
\begin{proplab}\label{Gon4.6} The following equalities hold by \cite[Proposition 4.6]{GPPS}.
\begin{enumerate}
\item $\hat{c}_1(1)=4h$
\item $\hat{c}_1(h)=12\alpha_1+4\alpha_2$
\item $\hat{c}_1(\alpha_1)=8\alpha_3+4\alpha_4$ and $\hat{c}_1(\alpha_2)=4\alpha_4$
\item $\hat{c}_1(\alpha_3) = 12\alpha_5+4\alpha_6$ and $\hat{c}_1(\alpha_4)=4\alpha_6+4\cdot1$
\item $\hat{c}_1(\alpha_5)=4\alpha_7+4\alpha_8$ and $\hat{c}_1(\alpha_6)=8\alpha_7+4h$
\item $\hat{c}_1(\alpha_7)=4\alpha_9+4\alpha_1$ and $\hat{c}_1(\alpha_8)=4\alpha_9+4\alpha_2$
\item $\hat{c}_1(\alpha_9)=4\alpha_{10}+4\alpha_3+4\alpha_4$
\item $\hat{c}_1(\alpha_{10})=4\alpha_5+4\alpha_6+8\cdot 1$
\end{enumerate}
\end{proplab}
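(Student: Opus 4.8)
The plan is to establish \propref{Gon4.6} as a direct transcription of the quantum Chevalley formula of \cite[Proposition 4.6]{GPPS} into the working basis $\{1,h,\alpha_1,\ldots,\alpha_{10}\}$, exactly as was done for cases (1) and (2). First I would fix a dictionary between the Schubert-class notation of \cite{GPPS} and the symbols $\alpha_i$, ordering the $\alpha_i$ by increasing codimension so that each line of \cite[Proposition 4.6]{GPPS} matches the correspondingly graded line of the statement. With this dictionary in place, each equation $h\star\sigma=\sum_j c_j\,\sigma_j+q\sum_k d_k\,\sigma_k$ of \cite{GPPS} is converted into a formula for $\hat{c}_1$ by setting $q=1$ and multiplying through by the Fano index $r=4$, using the relation $\hat{c}_1(\alpha_i)=4(h\star\alpha_i)|_{q=1}$ recorded at the start of Section~3.

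The one point demanding care is the treatment of the quantum corrections. The classical part of $h\star\sigma$ supplies the summands landing in strictly higher codimension (the $4\alpha_j$ and $8\alpha_j$ terms), whereas every monomial carrying a power of $q$ in \cite[Proposition 4.6]{GPPS} specializes at $q=1$ to a low-degree wrap-around term, namely the summands $4\cdot1$, $8\cdot1$, and $4h$ appearing in lines (4), (5), and (8). I would then check line by line that the Chevalley multiplicities of \cite{GPPS} reproduce the stated coefficients after scaling by $r=4$: the coefficient $8$ (in lines (3), (5), (8)) arises from a Chevalley multiplicity $2$, and the coefficient $12$ (in lines (2), (4)) from a multiplicity $3$. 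The main obstacle is purely bookkeeping, namely pinning down the correct bijection of basis elements and cleanly separating the classical from the quantum contributions, since any mismatch there would corrupt the adjacency data used afterward.

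Having established \propref{Gon4.6}, the downstream goal is to deduce that Property $\mathcal{O}$ holds in case (5) through \lemref{lemma: propO}, mirroring \lemref{lemma:case1} and \lemref{lemma:case2}. I would read the edges $\alpha_i\to\alpha_j$ off \propref{Gon4.6}, drawing an edge whenever $\alpha_j$ occurs with positive coefficient, assemble the resulting quantum Bruhat graph, and verify the three hypotheses. Nonnegativity is immediate, since every coefficient $4,8,12$ is positive. Strong connectivity follows by checking that $1$ reaches every $\alpha_i$ along the forward codimension-increasing edges and that every vertex returns to $1$ through one of the back edges $\alpha_4\to1$ and $\alpha_{10}\to1$, using $\alpha_6\to h$ to descend from the middle of the graph. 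Finally a cycle of length $r=4$ is furnished by $\alpha_9\to\alpha_3\to\alpha_5\to\alpha_7\to\alpha_9$, so that \lemref{lemma: propO} applies verbatim.
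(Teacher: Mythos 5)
Your proposal matches the paper's treatment exactly: Proposition~\ref{Gon4.6} is justified there purely as a transcription of \cite[Proposition 4.6]{GPPS} into the basis $\{1,h,\alpha_1,\ldots,\alpha_{10}\}$, scaling by $4$ (since $c_1(X)=4h$) and setting $q=1$ so that quantum terms become the low-degree summands $4\cdot 1$, $8\cdot 1$, $4h$. Your downstream verification also mirrors Lemma~\ref{lemma:case5}, differing only in the choice of length-$4$ cycle ($\alpha_9\alpha_3\alpha_5\alpha_7\alpha_9$ instead of the paper's $\alpha_{10}\alpha_6\alpha_7\alpha_9\alpha_{10}$), both of which are valid.
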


The associated quantum Bruhat graph is 
\begin{figure}[H]
\caption{}
\label{qcbg:5}
\begin{center}
\scalebox{.75}{
\begin{tikzpicture}
\tikzset{edge/.style = {->,> = latex'}}
    \node (0) at (0,0) {$1$};
    \node (1) at (0,-1) {$h$};
    \node (2) at (-1,-2) {$\alpha_1$};
    \node (3) at (1,-2) {$\alpha_2$};
    \node (4) at (-1,-3) {$\alpha_3$};
    \node (5) at (1,-3) {$\alpha_4$};
    \node (6) at (-1,-4) {$\alpha_5$};
    \node (7) at (1,-4) {$\alpha_6$};
    \node (8) at (-1,-5) {$\alpha_7$};
    \node (9) at (1,-5) {$\alpha_8$};
    \node (10) at (-1,-6) {$\alpha_9$};
    \node (11) at (-1,-7) {$\alpha_{10}$};
    \draw [edge] (0) to (1);
    \draw [edge] (1) to (2);
    \draw [edge] (1) to (3);
    \draw [edge] (2) to (4);
    \draw [edge] (2) to (5);
    \draw [edge] (3) to (5);
    \draw [edge] (4) to (6);
    \draw [edge] (4) to (7);
    \draw [edge] (5) to (7);
    \draw [edge] (5) to [bend right=100] (0);
    \draw [edge] (6) to (8);
    \draw [edge] (6) to (9);
    \draw [edge, ultra thick] (7) to (8);
    \draw [edge] (7) to [bend right=100] (1);
    \draw [edge] (8) to [bend left=100] (2);
    \draw [edge, ultra thick] (8) to (10);
    \draw [edge] (9) to [bend right=100] (3);
    \draw [edge] (9) to (10);
    \draw [edge] (10) to [bend right=100] (5);
    \draw [edge, ultra thick] (10) to (11);
    \draw [edge] (10) to [bend left=100] (4);
    \draw [edge, ultra thick] (11) to [bend right=100] (7);
    \draw [edge] (11) to [bend left=100] (6);
    \draw [edge] (11) to [bend left=100] (0);

\end{tikzpicture}
}
\end{center}
\end{figure}

\begin{lemlab}\label{lemma:case5}
Property $\mathcal{O}$ holds when $X$ is case (5) of Pasquier's list. 
\end{lemlab}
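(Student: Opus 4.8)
The plan is to verify the three hypotheses of Lemma~\ref{lemma: propO} for $X$ in case (5), exactly as was done for cases (1) and (2). First I would confirm hypothesis (1): the matrix of $\hat{c}_1$ with respect to the basis $\{1, h, \alpha_1, \dots, \alpha_{10}\}$ has as its entries precisely the coefficients appearing in Proposition~\ref{Gon4.6}, and since every such coefficient (namely $4$, $8$, or $12$) is nonnegative, the matrix representation of $\hat{c}_1$ is nonnegative.

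Next I would establish hypothesis (2), strong connectivity of the quantum Bruhat graph, by reading it off from Figure~\ref{qcbg:5}. One checks that from the source vertex $1$ every vertex is reachable by following the directed edges down through the two columns (e.g. $1 \to h \to \alpha_1 \to \alpha_3 \to \alpha_5 \to \alpha_7 \to \alpha_9 \to \alpha_{10}$, with the remaining vertices branching off this spine), and that each vertex admits a directed path back to $1$ via the ``return'' edges $\alpha_4 \to 1$ and $\alpha_{10} \to 1$ together with the feedback edges $\alpha_7 \to \alpha_1$, $\alpha_8 \to \alpha_2$, $\alpha_9 \to \alpha_3$, $\alpha_9 \to \alpha_4$, and $\alpha_{10}\to\alpha_5$. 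Since $1$ then lies on a directed cycle through every other vertex, the graph is strongly connected.

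Finally, for hypothesis (3) I would exhibit a directed cycle of length $r = 4$. The bold edges in Figure~\ref{qcbg:5} trace out $\alpha_6 \to \alpha_7 \to \alpha_9 \to \alpha_{10} \to \alpha_6$, arising from the summand $8\alpha_7$ in $\hat{c}_1(\alpha_6)$, the summand $4\alpha_9$ in $\hat{c}_1(\alpha_7)$, the summand $4\alpha_{10}$ in $\hat{c}_1(\alpha_9)$, and the summand $4\alpha_6$ in $\hat{c}_1(\alpha_{10})$; this cycle has length $4$, matching the Fano index. With all three hypotheses of Lemma~\ref{lemma: propO} verified, Property $\mathcal{O}$ holds for $X$ in case (5).

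I do not anticipate a substantive obstacle, since the argument is a direct transcription of the two preceding cases. The only point demanding care is the verification of strong connectivity from Figure~\ref{qcbg:5}: the return edges cross between the two columns of the graph, so each must be matched against the correct summand in Proposition~\ref{Gon4.6} to be sure the claimed directed paths back to $1$ genuinely exist.
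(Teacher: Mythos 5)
Your proposal is correct and follows exactly the paper's approach: verify the three hypotheses of Lemma~\ref{lemma: propO} using the nonnegative coefficients in Proposition~\ref{Gon4.6}, strong connectivity from Figure~\ref{qcbg:5}, and a cycle of length $4$. The cycle you exhibit, $\alpha_6 \to \alpha_7 \to \alpha_9 \to \alpha_{10} \to \alpha_6$, is the very cycle the paper uses (written there as $\alpha_{10}\alpha_6\alpha_7\alpha_9\alpha_{10}$); your extra detail spelling out the paths back to the vertex $1$ is a sound, if more explicit, rendering of what the paper delegates to the figure.
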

\begin{proof}
The coefficients that appear in the equations in Proposition \ref{Gon4.6} are the entries of the matrix representation of $\hat{c}_1$. Therefore, the matrix representation of $\hat{c}_1$ is nonnegative. The quantum Bruhat graph is strongly connected by Figure \ref{qcbg:5}, and the cycle $\alpha_{10}\alpha_6\alpha_7\alpha_9\alpha_{10}$ has length $r=4$.
\end{proof}
Theorem \ref{thm:main} follows from Lemmas \ref{lemma:case1}, \ref{lemma:case2}, \ref{lemma:case5}, and the previously mentioned work done by Li, Mihalcea, Shifler for the odd symplectic Grassmannian case in \cite{LMS}.

\end{document}